\renewcommand{\leq}{\leqslant}
\newcommand{\Lip}{{\mathrm{Lip}}_0}
\newtheorem{theorem}{Theorem}[section]
\newtheorem{lemma}[theorem]{Lemma}
\newtheorem{proposition}[theorem]{Proposition}
\newtheorem{corollary}[theorem]{Corollary}
\theoremstyle{definition}
\theoremstyle{remark}
\newtheorem{remark}[theorem]{Remark}
\numberwithin{equation}{section}
\def\fnote#1{\footnote}
\def\ignora#1{}
\def\n3#1{\left\vert  \! \left\vert \! \left\vert \, #1 \, \right\vert \!
  \right\vert \! \right\vert }
\begin{document}

\title[$L_1$-preduals and extensions of Lipschitz maps]{ A characterisation of $L_1$-preduals in terms of extending Lipschitz maps }

\author{ Abraham Rueda Zoca }\thanks{The author was supported by Juan de la Cierva-Formaci\'on fellowship FJC2019-039973, by MTM2017-86182-P (Government of Spain, AEI/FEDER, EU), by MICINN (Spain) Grant PGC2018-093794-B-I00 (MCIU, AEI, FEDER, UE), by Fundaci\'on S\'eneca, ACyT Regi\'on de Murcia grant 20797/PI/18, by Junta de Andaluc\'ia Grant A-FQM-484-UGR18 and by Junta de Andaluc\'ia Grant FQM-0185.}
\address{Universidad de Murcia, Departamento de Matem\'aticas, Campus de Espinardo 30100 Murcia, Spain} \email{ abraham.rueda@um.es}
\urladdr{\url{https://arzenglish.wordpress.com}}

\keywords{$L_1$-predual; Lipschitz-free spaces; Extension of Lipschitz maps; Extension of Compact operators}

\subjclass[2020]{47B07, 46A22, 46B80}

\maketitle

\begin{abstract}
We characterise the Banach spaces $X$ which are $L_1$-predual as those for which every Lipschitz compact mapping $f:N\longrightarrow X$ admits, for every $\varepsilon>0$ and every $M$ containing $N$, a Lipschitz (compact) extension $F:M\longrightarrow X$ so that $\Vert F\Vert\leq (1+\varepsilon)\Vert f\Vert$. Some consequences are derived about $L_1$-preduals and about Lipschitz-free spaces.
\end{abstract}

\section{Introduction}

One of the most celebrated results around Lipschitz functions spaces is the celebrated McShane extension theorem, which asserts that any Lipschitz function $f:N\longrightarrow \mathbb R$ can be extended, for every $M$ containing $N$, to a Lipschitz mapping $F:M\longrightarrow \mathbb R$ without increasing its Lipschitz constant (see e.g. \cite[Theorem 1.33]{weaver}). An analogous result in Banach space theory (which is strongly connected to via \textit{Lipschitz-free spaces}, see \cite[P. 85]{weaver}), which says that given any Banach space $X$, any subspace $M\subseteq X$ and any bounded linear functional $f:M\longrightarrow\mathbb R$ then $f$ can be extended to the whole $X$ in a norm preserving way.

These two nice results, which deal with scalar valued functions, turn out to be false when one considers vector-valued mappings (see \cite[Chapter 2]{beli} for examples). Once one discovers this unpleasant behaviour, we can wonder by those Banach spaces $X$ which are good targets for extending maps.

Concerning the question for bounded linear operators in Banach spaces, the Banach spaces $X$ which give a general Hahn-Banach extension theorem for bounded operators are the \textit{injective Banach spaces} (see e.g. \cite{accgm}). This class of Banach spaces, though being very pleasant for the problem of extending operators, turns out to be very small as reveals, for instance, the fact that there is not any separable injective Banach space. Because of this reason, another possibility is considering Banach spaces which extends a smaller class of bounded operators. In that spirit,  the celebrated result due to J.~Lindenstrauss \cite[Theorem 6.1]{linds} asserts that, given a Banach space $X$, the following are equivalent:
\begin{itemize}
\item Every compact operator $T:Y\longrightarrow X$ has, for every $\varepsilon>0$, a compact extension $\hat T:Z\longrightarrow X$ ($Z\supset Y$) with $\Vert \hat T\Vert\leq (1+\varepsilon)\Vert T\Vert$.
\item $X^*$ is (isometrically) an $L_1(\mu)$ space (for some space $\mu$).
\end{itemize}
Because of this reason these spaces are commonly known as Lindenstrauss spaces (here, however, we will say that $X$ is an \textit{$L_1$-predual}).

Conerning extensions of vector-valued Lipschitz mapping, it has been considered in \cite[Section 2.3]{beli} the concept of \textit{contraction extension property}. Given two Banach spaces $X$ and $Y$, it is said that the pair $(X,Y)$ has the \textit{contraction extension property (CEP)} if, for every $N\subseteq X$ and every Lipschitz function $f:N\longrightarrow Y$ there exists an extension $F:X\longrightarrow Y$ with the same Lipschitz constant. This concept is, again, very restrictive, as proves the fact that if $(X,X)$ has the CEP then, either $X$ is a Hilbert space (if $X$ is strictly convex), or $X$ is an $L_1$-predual if it is not strictly convex \cite[Theorem 2.11]{beli}.

In view of the presence of $L_1$-preduals in connection with the CEP, we can wonder how can $L_1$-preduals be characterised in terms of extensions of Lipschitz mappings. This is the aim of this note (see Theorem \ref{theo:caraL1predual}). We prove that, given a Banach space $X$, the following are equivalent:
\begin{itemize}
\item For every pair of metric spaces $N\subseteq M$ and every Lipschitz compact mapping $f:N\longrightarrow X$ there exists, for every $\varepsilon>0$, a Lipschitz compact mapping $F:M\longrightarrow X$ so that $\Vert F\Vert\leq (1+\varepsilon)\Vert f\Vert$.
\item $X$ is an $L_1$-predual.
\end{itemize}
See Section \ref{section:preliminar} for the notions of norm of a Lipschitz mapping and for the definition of Lipschitz compact maps. First, we prove in Theorem \ref{theo:condisufi} that if $X$ is an $L_1$-predual then we get the desired extension property. For this, we need the \textit{Lipschitz-free space} together with the above mentioned celebrated characterisation of $L_1$-preduals. For the converse, we will make use of a characterisation of $L_1$-preduals in terms of a property of intersections of balls. From the main characterisation we get two consequences. The first one is that, in order to prove that a Banach space $X$ is an $L_1$-predual, it is enough to check the property of extension of bounded linear operators described above \cite[Theorem 6.1(3)]{linds} in the class of Lipschitz-free spaces. On the other hand, we get that if $X$ and $Y$ are two Banach spaces for which, for every $\varepsilon>0$, there exists an onto bi-Lipschitz isomorphism $\phi_\varepsilon: X\longrightarrow Y$ so that $\Vert \phi_\varepsilon\Vert\Vert \phi_\varepsilon^{-1}\Vert\leq 1+\varepsilon$, then $X$ is an $L_1$-predual if, and only if, $Y$ is an $L_1$-predual.

\section{Preliminary results}\label{section:preliminar}

Given a metric space $(M,d)$, we will denote by $B(x,r)$ closed ball centred at $x\in M$ with radius $r>0$. Given a Banach space $X$ and a Lipschitz function $f:M\longrightarrow X$, we will denote by $\Vert f\Vert$ to the best Lipschitz constant of $f$, i.e., to the quantity
$$ \Vert f \Vert := \sup\left\{\frac{\Vert f(x)-f(y)\Vert }{d(x,y)} \colon x,y\in M,\, x \neq y \right\}.$$
In general, the previous expression does not define a norm in the vector space of all the Lipschitz functions since every constant function has $0$-Lipschitz constant. However,  we keep this notation because this expression is ``close'' to be a norm in the following sense.

Consider a metric space $M$ in which we distinguish an element, called $0$. Given a Banach space $X$, we write $\Lip(M,X)$ ($\Lip(M)$ when $X=\mathbb R$) to denote the vector space of all Lipschitz maps $f: M\longrightarrow X$ which vanish at $0$. When endowed with $\Vert \cdot\Vert$ defined as before, $\Lip(M,X)$ turns out to be a Banach space and, in the case when $X=\mathbb R$, $\Lip(M)$ is even a dual Banach space.

We denote by $\delta$ the canonical isometric embedding of $M$ into $\Lip(M)^*$, which is given by $\langle f, \delta(x) \rangle =f(x)$ for $x \in M$ and $f \in \Lip(M)$. We denote by $\mathcal{F}(M)$ the norm-closed linear span of $\delta(M)$ in the dual space $\Lip(M)^*$, which is usually called the \textit{Lipschitz-free space over $M$}; for background on this, see the survey \cite{godesurv} and the book \cite{weaver} (where it receives the name of ``Arens-Eells space''). It is well known that $\mathcal{F}(M)$ is an isometric predual of the space $\Lip(M)$ \cite[p. 91]{godesurv}. We will write $\delta_x:=\delta(x)$ for $x\in M$.

A fundamental result in the theory of Lipschitz-free spaces is that, roughly speaking, Lipschitz-free spaces linearise Lipschitz maps. In a more precise language, given a metric space $M$, a Banach space $X$ and a Lipschitz map $f:M\longrightarrow X$ such that $f(0)=0$, there exists a bounded operator $T_f:\mathcal F(M)\longrightarrow X$ such that $\Vert T_f\Vert=\Vert f\Vert_L$ defined by
$$T_f(\delta_m):=f(m) \ , \quad m\in M.$$
Moreover, the mapping $f\longmapsto T_f$ is an onto linear isometry between $\Lip(M,X)$ and the space of bounded operators $\mathcal L(\mathcal F(M),X)$.
This linearisation property makes Lipschitz-free spaces a precious magnifying glass to study Lipschitz maps between metric spaces, and for example it relates some well-known open problems in the Banach space theory to some open problems about Lipschitz-free spaces (see \cite{godesurv}). 

When dealing with the celebrated characterisation of $L_1$-preduals given in \cite[Theorem 6.1]{linds} it is necessary to deal with compact operators. We will consider the following concept of Lipschitz compact functions, already considered in \cite{jsv}. Given a metric space $M$, a Banach space $X$ and a Lipschitz mapping $f:M\longrightarrow X$, we say that $f$ is \textit{compact} if the set
$$\left\{\frac{f(x)-f(y)}{d(x,y)}: x,y\in M, x\neq y \right\}\subseteq X$$
is relatively compact. It is inmediate that if $M$ is finite or if $X$ is finite dimensional, then every Lipschitz mapping $f:M\longrightarrow X$ is compact. This fact becomes even clearer with the fact that, given a Lipschitz mapping $f:M\longrightarrow X$ which vanishes at any distinguised point $0\in M$, it follows that $f$ is compact if, and only if, its linearisation operator $T_f:\mathcal F(M)\longrightarrow X$ is compact (i.e $T(B_{\mathcal F(M)})\subseteq X$ is relatively compact) \cite[Proposition 2.1]{jsv}.

In the proof of the main result the following fact is essential: given a Banach space $X$, if $X^{**}$ is an $L_1$-predual, then so is $X$. Even though this result is well known (see e.g. \cite[Lemma 2.12]{beli}), let us present a justification from a wider perspective. To do so, let us introduce the concept of \textit{ideals}. Let $Z$ be a subspace of a Banach space $X$.
We say that $Z$ is \textit{locally complemented} (a.k.a. an ideal) in $X$ if, for each $\varepsilon>0$ and for each finite-dimensional subspace $E\subseteq X$, there exists a linear operator $T:E\to Z$ satisfying
\begin{enumerate}
\item\label{item:ai-1}
  $T(e)=e$ for each $e\in E\cap Z$, and
\item\label{item:ai-2}
  $\Vert T(e)\Vert\leq  (1+\varepsilon) \Vert e \Vert$
  for each $e\in E$,
\end{enumerate}
i.e., $T$ is a $(1+\varepsilon)$ operator fixing the elements of $E$. See \cite{gks,kalton84} for background. Note that the Principle of Local Reflexivity \cite[Theorem 9.15]{fab} implies that $X$ is an ideal in $X^{**}$ for every Banach space $X$.

In general, the property of being an $L_1$-predual is stable under taking locally complemented subspaces. This fact was implicitly proved in \cite[Theorem 1]{rao2} using a characterisation of $L_1$-preduals in terms of a property of intersections of balls. However, let us include here a proof, for the sake of completeness, making use of extension of compact operators.

\begin{lemma}\label{inheriL1predual}
Let $X$ be an $L_1$ predual and let $Y$ be an ideal in $X$. Then $Y$ is an $L_1$ predual.
\end{lemma}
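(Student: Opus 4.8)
The plan is to use the characterisation of $L_1$-preduals via Lindenstrauss's theorem (the $(1+\varepsilon)$-extension property for compact operators) together with the fact that an ideal admits, essentially by definition, good finite-dimensional approximation maps back into itself. More precisely, suppose $Y$ is an ideal in the $L_1$-predual $X$, and let $T\colon Z_0\longrightarrow Y$ be a compact operator from a subspace $Z_0$ of a Banach space $Z$; fix $\varepsilon>0$. Composing $T$ with the inclusion $j\colon Y\hookrightarrow X$ gives a compact operator $jT\colon Z_0\longrightarrow X$, which by \cite[Theorem 6.1]{linds} extends to a compact operator $S\colon Z\longrightarrow X$ with $\norm{S}\leq(1+\varepsilon)\norm{jT}=(1+\varepsilon)\norm{T}$. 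The remaining task is to push $S$ back into $Y$ without destroying compactness and without inflating the norm by more than another $(1+\varepsilon)$ factor, while keeping it an extension of $T$.

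For this last step I would exploit that $S$ is compact, so $K:=\overline{S(B_Z)}$ is a norm-compact subset of $X$. I want to replace $S$ by an operator with range in $Y$. The natural idea: since $Y$ is an ideal, for each finite-dimensional $E\subseteq X$ there is an operator $E\to Y$ fixing $E\cap Y$ with norm $\leq 1+\varepsilon$; using compactness of $K$, choose a finite $\varepsilon$-net and let $E$ be the span of that net together with (finitely many points capturing) the values of $T$ on a suitable finite subset — but to get an honest extension I should instead argue more carefully. A cleaner route is to use that $S$ factors through a separable subspace, pass to the bidual: $Y^{**}$ is complemented in $X^{**}$ is not automatic, so rather I would use that the ideal property gives a norm-one projection from $X^{***}$ onto $Y^{***}$ whose pre-adjoint structure lets one average. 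Honestly the most transparent argument is: regard $S$ as a compact operator into $X$, note $X$ embeds isometrically in $\ell_\infty(\Gamma)$, and use that $Y$ being an ideal in $X$ means there is an operator $\Phi\colon X\to Y^{**}$ fixing $Y$ with $\norm{\Phi}=1$; then $\Phi S\colon Z\to Y^{**}$ is compact, extends $T$ (since on $Z_0$ it equals $\Phi j T=T$ as $\Phi$ fixes $Y$), and lands in the compact set $\Phi(K)\subseteq Y^{**}$. Because $Y^{**}$ is the target we are not quite done — but an operator from $Z$ into $Y^{**}$ that is compact and whose range lies in a fixed separable subspace can be perturbed, using the ideal property of $Y$ in $Y^{**}$ (Local Reflexivity applied locally to the compact range), into an operator into $Y$ agreeing with $T$ on $Z_0$ up to $\varepsilon$.

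Rather than this somewhat delicate perturbation, I would actually prefer to conclude differently: prove that $Y^{**}$ is an $L_1$-predual and then invoke the fact already recalled in the paper that $X^{**}$ an $L_1$-predual implies $X$ is one. Since $Y$ is an ideal in $X$, $Y^{**}$ is $1$-complemented in $X^{**}$ (the ideal projection $X^{***}\to Y^{***}$ dualises appropriately, or directly: the inclusion $Y\hookrightarrow X$ being an ideal embedding means $Y^{\perp}$ is the kernel of a norm-one projection on $X^{*}$, hence $Y^{**}\cong (X^{*}/Y^{\perp})^{*}$ is $1$-complemented in $X^{**}$). A $1$-complemented subspace of an $L_1$-predual is again an $L_1$-predual — this is classical and can itself be seen from Lindenstrauss's extension property, since a norm-one projection lets one push extended compact operators back onto the subspace with no norm increase. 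Then $Y^{**}$ is an $L_1$-predual, hence so is $Y$.

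The main obstacle is precisely the transition from "$X$ has the $(1+\varepsilon)$-compact-extension property" to the same for $Y$: a priori one only controls $X$-valued extensions, and the ideal structure of $Y$ in $X$ must be leveraged to correct the range, which is where the compactness of the operators is genuinely used (an arbitrary bounded operator into $X$ cannot be pushed into the ideal $Y$, but a compact one can, because its range is approximable by finite-dimensional pieces on which the ideal operators act). I expect the cleanest writeup routes through $Y^{**}$ being $1$-complemented in $X^{**}$ as above, which sidesteps the $\varepsilon$-bookkeeping of a direct perturbation argument; the compactness hypothesis then enters only through Lindenstrauss's theorem itself.
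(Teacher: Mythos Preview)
Your preferred route through the bidual is essentially correct: if $Y$ is an ideal in $X$ then (by the Godefroy--Kalton--Saphar characterisation) $Y^{\perp}$ is the kernel of a norm-one projection $P$ on $X^{*}$, so $P^{*}$ is a norm-one projection of $X^{**}$ onto $(Y^{\perp})^{\perp}=Y^{**}$; since $X^{**}$ is an $L_1$-predual and $1$-complemented subspaces of $L_1$-preduals are $L_1$-preduals, $Y^{**}$ is an $L_1$-predual, and you then descend to $Y$. The wrinkle is that this last descent ``$Y^{**}$ an $L_1$-predual $\Rightarrow$ $Y$ an $L_1$-predual'' is exactly the fact the paper is proving \emph{from} this lemma (combined with local reflexivity). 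An independent proof exists in the literature (\cite[Lemma~2.12]{beli}), so you are not mathematically circular, but you have dismantled the ``wider perspective'' the paper advertises: your argument deduces the ideal statement from the bidual statement rather than the other way round.

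The paper's argument is quite different and much more direct; it avoids the bidual, the GKS projection characterisation, and all of your perturbation worries. Instead of the compact-operator characterisation of $L_1$-preduals it uses the \emph{finite-dimensional} one (\cite[Theorem~6.1, (2)$\Leftrightarrow$(4)]{linds}): $Y$ is an $L_1$-predual iff every operator $T\colon Z\to Y$ with $\dim Z\leq 3$ admits, for each $\varepsilon>0$ and each $V\supseteq Z$ with $\dim(V/Z)=1$, an extension $\Psi\colon V\to Y$ with $\Vert\Psi\Vert\leq(1+\varepsilon)\Vert T\Vert$. Given such $T$, extend $i\circ T$ (with $i\colon Y\hookrightarrow X$) to $\Phi\colon V\to X$ with $\Vert\Phi\Vert\leq(1+\delta)\Vert T\Vert$. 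Since $V$ is finite-dimensional, $\Phi(V)$ is a finite-dimensional subspace of $X$, so the ideal hypothesis gives directly an operator $P\colon\Phi(V)\to Y$ fixing $\Phi(V)\cap Y$ with $\Vert P\Vert\leq 1+\delta$. Then $\Psi:=P\circ\Phi$ extends $T$ (as $T(Z)\subseteq Y$ is fixed by $P$) and has the right norm.

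The ``delicate'' step you struggled with in your first approach---pushing a compact $X$-valued operator back into $Y$---is precisely what the paper sidesteps: by choosing a characterisation of $L_1$-preduals with finite-dimensional domains, the range is automatically finite-dimensional and the defining property of an ideal applies on the nose, with no compactness or approximation arguments needed.
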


\begin{proof}
In order to prove that $Y$ is an $L_1$ predual we will make use of the equivalence between (2) and (4) in \cite[Theorem 6.1]{linds}. To this end, consider a Banach space $Z$ such that $\dim(Z)\leq 3$, a bounded operator $T:Z\longrightarrow Y$, a positive $\varepsilon$ and another Banach space $V$ containing $Z$ such that $dim\left(V/Z\right)=1$, and let us prove that there exists a bounded operator $\Psi:V\longrightarrow Y$ extending $T$ such that $\Vert \Psi\Vert\leq (1+\varepsilon)\Vert T\Vert$. To this end consider a positive $\delta$ such that $(1+\delta)^2<1+\varepsilon$ and consider $i:Y\longrightarrow X$ the inclusion operator. Now, since $X$ is an $L_1$ predual, there exists an extension $\Phi:V\longrightarrow X$ of $i\circ T$ such that $\Vert \Phi\Vert\leq (1+\delta)\Vert i\circ T\Vert\leq (1+\delta)\Vert T\Vert$. Now, since $\Phi(V)$ is a finite-dimensional subspace of $X$ and $Y$ is an ideal in $X$, we can find a linear operator $P:\Phi(V)\longrightarrow Y$ satisfying $\Vert P\Vert\leq 1+\delta$ and such that
$$P(z)=z\ \forall z\in \Phi(V)\cap Y.$$
Finally define $\Psi=P\circ \Phi:V\longrightarrow Y$. It is obvious that $\Vert \Psi\Vert\leq (1+\delta)^2\Vert T\Vert\leq (1+\varepsilon)\Vert T\Vert$. Moreover, given $z\in Z$ one has
$$\Psi(z)=P(\Phi(z))=P(i(T(z)))=T(z),$$
since $\Phi(z)\in Y$. Consequently, $\Psi$ satisfies the desired requirements, so we are done.
\end{proof}

\section{Main results}

Let us begin with the following result, where we will prove a result of extension of Lipschitz mappings throughout the characterisation of $L_1$-predual spaces given in \cite[Theorem 6.1]{linds}, for which we will make use of Lipschitz-free spaces.

\begin{theorem}\label{theo:condisufi}
Let $X$ be an $L_1$-predual space. Then, for every pair of metric spaces $N\subseteq M$ and every Lipschitz compact function $f:N\longrightarrow X$ there exists, for every $\varepsilon>0$, a Lipschitz compact function $F:M\longrightarrow X$ so that
\begin{enumerate}
\item $F(x)=f(x)$ holds for every $x\in N$ and,
\item $\Vert F\Vert\leq (1+\varepsilon)\Vert f\Vert$.
\end{enumerate}
\end{theorem}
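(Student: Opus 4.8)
The plan is to linearise everything and invoke Lindenstrauss's characterisation of $L_1$-preduals via extension of compact operators. First I would reduce to the case $N \subseteq M$ with a common distinguished point: pick any $0 \in N$, translate $f$ so that $f(0) = 0$ (this changes neither $\|f\|$ nor compactness), and note it suffices to produce $F$ vanishing at $0$. Then I would form the Lipschitz-free spaces $\F N$ and $\F M$. Since $N \subseteq M$ isometrically, the canonical map $\delta_N \mapsto \delta_M$ extends to a linear isometric embedding $\iota : \F N \longrightarrow \F M$ (this is standard; it is dual to the McShane restriction map $\Lip(M) \to \Lip(N)$, which is onto). Under the linearisation isometry, $f$ corresponds to a compact operator $T_f : \F N \longrightarrow X$ with $\|T_f\| = \|f\|$, using \cite[Proposition 2.1]{jsv}.

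Now the key step: apply \cite[Theorem 6.1]{linds}. Since $X$ is an $L_1$-predual, every compact operator into $X$ admits, for every $\varepsilon > 0$, a compact extension from any larger space with norm at most $(1+\varepsilon)$ times the original. Apply this with the compact operator $T_f : \F N \to X$, viewing $\F N$ as the subspace $\iota(\F N) \subseteq \F M$: there is a compact operator $S : \F M \longrightarrow X$ with $S \circ \iota = T_f$ and $\|S\| \leq (1+\varepsilon)\|T_f\| = (1+\varepsilon)\|f\|$. Finally, pull $S$ back to a Lipschitz map by setting $F(m) := S(\delta_m)$ for $m \in M$; this is a Lipschitz map $M \to X$ with $F(0) = 0$, its linearisation is exactly $S$, so $\|F\| = \|S\| \leq (1+\varepsilon)\|f\|$, and $F$ is compact because $S$ is, again by \cite[Proposition 2.1]{jsv}. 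For $x \in N$ we have $F(x) = S(\delta^M_x) = S(\iota(\delta^N_x)) = T_f(\delta^N_x) = f(x)$, so $F$ extends $f$. Undoing the initial translation gives the statement as written.

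The only genuinely delicate point is the isometric embedding $\iota : \F N \hookrightarrow \F M$: one must be sure that it is an isometry onto its image (not merely bounded), which is exactly where McShane's extension theorem enters — it guarantees the restriction operator $\Lip(M) \to \Lip(N)$ is a quotient map, hence its adjoint is an isometric embedding, and one checks this adjoint agrees with $\delta^N_x \mapsto \delta^M_x$ on the generating set $\delta(N)$. Everything else is bookkeeping: checking that translation preserves the Lipschitz constant and compactness, and that the linearisation functor is compatible with composition so that $S$ really is the linearisation of the map $m \mapsto S(\delta_m)$. I expect no obstacle beyond assembling these standard facts in the right order.
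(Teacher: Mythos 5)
Your proposal is correct and follows essentially the same route as the paper: reduce to $f(0)=0$, linearise to a compact operator $T_f:\F{N}\to X$, extend it via Lindenstrauss's theorem using the canonical isometric embedding $\F{N}\hookrightarrow\F{M}$, and pull the extension back to a Lipschitz map. The details you flag as delicate (the isometric embedding of free spaces and the compactness transfer via \cite[Proposition 2.1]{jsv}) are exactly the standard facts the paper relies on implicitly.
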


\begin{proof}
Select a distinguised point in $N$ which we will call $0$ for convenience. Assume first that $f(0)=0\in X$.

Pick $T_f:\mathcal F(N)\longrightarrow X$ the linearisation operator of $f$. Since $X$ is an $L_1$-predual then, given $\varepsilon>0$, there exists an extension $T:\mathcal F(M)\longrightarrow X$ so that $\Vert T\Vert\leq (1+\varepsilon)\Vert T_f\Vert=(1+\varepsilon)\Vert f\Vert$ \cite[Theorem 6.1]{linds}. Now $F:M\longrightarrow X$ defined by $F(x):=T(\delta_x)$ for every $x\in M$ does the trick.

For the general case, if $f(0)\neq 0$, define the auxiliar mapping $g(x)=f(x)-f(0)$ for every $x\in N$. Since $g(0)=0$, we can find $G:M\longrightarrow X$ extending $g$ so that $\Vert G\Vert\leq (1+\varepsilon)\Vert g\Vert=(1+\varepsilon)\Vert f\Vert$. Now $F:M\longrightarrow X$ defined by $F(x)=G(x)+f(0)$ is the desired extension.\end{proof}

Our aim will be to establish a converse. To save a bit of notation in the intermediate results, let us say that a Banach space $X$ has the \textit{$\varepsilon$-finite extension property} if, for every pair of finite metric spaces $N$ and $M$ with $N\subseteq M$ and for every Lipschitz function $f:N\longrightarrow X$ there exists an extension $F:M\longrightarrow X$ so that $\Vert F\Vert\leq (1+\varepsilon)\Vert f\Vert$. We will talk of the \textit{$0$-finite extension property} if $\varepsilon=0$ can be taken in the previous definition.

The following result says that if a Banach space $X$ has the $\varepsilon$-finite extension property then, going to the bidual space $X^{**}$, one can take $\varepsilon=0$. This can be seen as an analogous counterpart to what happen in the case of bounded linear operators (see Remark \ref{remark:linealbidual}).

\begin{proposition}\label{prop:bidual}
Let $X$ be a Banach space with the $\varepsilon$-finite extension property. Then $X^{**}$ has the $0$-finite extension property.
\end{proposition}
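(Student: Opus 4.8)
The plan is to prove this by a standard ultraproduct / weak$^*$-compactness argument, exploiting finiteness of the metric spaces involved. Fix finite metric spaces $N \subseteq M$, a distinguished point $0 \in N$, and a Lipschitz map $f : N \longrightarrow X^{**}$ with (without loss of generality) $f(0)=0$; we want a norm-$\Vert f\Vert$ extension $F : M \longrightarrow X^{**}$. Since $N$ is finite, $f(N)$ is a finite subset of $X^{**}$, so by the Principle of Local Reflexivity (applied to the finite-dimensional subspace $E = \spann f(N)$ together with the finitely many relevant Lipschitz-constant inequalities), for each $n \in \natu$ there is a $(1+\tfrac1n)$-isomorphism $R_n : E \to X$ which is weak$^*$-to-norm close to the identity on a prescribed finite set. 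This produces, for every $n$, a Lipschitz map $f_n := R_n \circ f : N \longrightarrow X$ with $\Vert f_n\Vert \leq (1+\tfrac1n)\Vert f\Vert$ and $f_n(x) \xrightarrow{w^*} f(x)$ for every $x \in N$.

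Next I would apply the $\varepsilon$-finite extension property of $X$ to each $f_n$: for every $n$ there is an extension $F_n : M \longrightarrow X$ with $\Vert F_n\Vert \leq (1+\varepsilon)\Vert f_n\Vert \leq (1+\varepsilon)(1+\tfrac1n)\Vert f\Vert$. Actually, since we are aiming for the $0$-finite extension property in $X^{**}$, I would be slightly more careful: replace the fixed $\varepsilon$ by a sequence $\varepsilon_n \downarrow 0$ — this is legitimate because the hypothesis ``$X$ has the $\varepsilon$-finite extension property'' should be read as holding for \emph{some} fixed $\varepsilon$, so to get $0$ in the bidual one wants the estimates to improve. If the hypothesis only grants one fixed $\varepsilon$, then in fact the right statement is to first observe (or it is implicitly the point) that having the $\varepsilon$-finite extension property for one $\varepsilon > 0$ is the key; let me instead run the argument keeping $\varepsilon$ but then note that the limiting map will satisfy $\Vert F\Vert \leq (1+\varepsilon)\Vert f\Vert$, and a separate easy remark upgrades ``$(1+\varepsilon)$ for all $\varepsilon>0$'' to ``$0$''. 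The cleanest route: assume $X$ has the $\varepsilon$-finite extension property for every $\varepsilon > 0$ (which is the form in which it will be used), pick $\varepsilon_n \to 0$, and get $F_n$ with $\Vert F_n\Vert \leq (1+\varepsilon_n)(1+\tfrac1n)\Vert f\Vert \to \Vert f\Vert$.

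Now comes the passage to the limit. Each $F_n(x)$, for $x \in M$, lies in a ball of $X$ of radius controlled by $\length(\text{path to }0)\cdot \Vert F_n\Vert$, hence in a bounded subset of $X^{**}$; since $M$ is finite, the sequence $(F_n)_n$ lives in a fixed product of balls $\prod_{x \in M} B_{X^{**}}(0, r_x)$, which is weak$^*$-compact (Banach–Alaoglu plus Tychonoff). Extract a subnet (or use a free ultrafilter $\mathcal U$ on $\natu$) so that $F_n(x) \xrightarrow{w^*} F(x) =: w^*\text{-}\lim_{\mathcal U} F_n(x)$ exists for every $x \in M$. For $x \in N$ we get $F(x) = w^*\text{-}\lim_{\mathcal U} F_n(x) = w^*\text{-}\lim_{\mathcal U} f_n(x) = f(x)$, so $F$ extends $f$. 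For the norm: for $x, y \in M$ and any $\phi \in B_{X^*}$,
$$ \abs{\langle \phi, F(x) - F(y)\rangle} = \lim_{\mathcal U} \abs{\langle \phi, F_n(x) - F_n(y)\rangle} \leq \limsup_n \Vert F_n\Vert\, d(x,y) = \Vert f\Vert\, d(x,y),$$
and taking the supremum over $\phi \in B_{X^*}$, which equals the norm in $X^{**}$, gives $\Vert F(x) - F(y)\Vert \leq \Vert f\Vert\, d(x,y)$. Hence $\Vert F\Vert \leq \Vert f\Vert$, and trivially $\Vert F\Vert \geq \Vert f\Vert$ since $F$ extends $f$, so $\Vert F\Vert = \Vert f\Vert$. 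This shows $X^{**}$ has the $0$-finite extension property.

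I expect the only genuinely delicate point to be the bookkeeping in the application of the Principle of Local Reflexivity: one must choose the finite-dimensional subspace $E$ and the finite set of functionals/test elements so that the resulting near-isometries $R_n$ simultaneously (i) move $f$ to a genuine $X$-valued map with Lipschitz constant at most $(1+\tfrac1n)\Vert f\Vert$ and (ii) converge weak$^*$ pointwise on $N$ back to $f$. Since $N$ is finite this is a finite amount of data, so PLR handles it, but it is the step that requires care rather than being purely formal. Everything else — weak$^*$-compactness of a finite product of dual balls, lower semicontinuity of the norm under weak$^*$-limits — is routine.
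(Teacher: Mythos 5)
Your overall strategy --- push $f$ down into $X$ via the Principle of Local Reflexivity, extend there using the hypothesis, and pass to a weak$^*$ cluster point in a product of balls of $X^{**}$ --- is exactly the strategy of the paper's proof. The problem is your choice of index set. You run the construction over a \emph{sequence} of operators $R_n$, prescribing at each step only finitely many functionals, and then claim that $f_n(x)=R_n(f(x))\xrightarrow{w^*} f(x)$ and hence that the limit $F(x)=w^*\text{-}\lim_{\mathcal U}f_n(x)$ equals $f(x)$ for $x\in N$. This is the step that fails in general: equality in $X^{**}$, and weak$^*$ convergence to a prescribed point of $X^{**}$, must be tested against \emph{every} $x^*\in X^*$, whereas along a sequence you can only arrange $x^*(R_n(f(x)))\to \langle f(x),x^*\rangle$ for the countably many functionals you prescribed. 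When $X^*$ is nonseparable, the weak$^*$ topology on bounded subsets of $X^{**}$ is not metrizable and no countable subset of $X^*$ need separate the points of $X^{**}$, so a weak$^*$ cluster point of $\bigl(R_n(f(x))\bigr)_n$ need not be $f(x)$; your limit $F$ then need not extend $f$. Your closing remark that ``since $N$ is finite this is a finite amount of data'' misidentifies where the data lives: finiteness of $N$ controls the number of points, not the number of functionals against which the identity $F(x)=f(x)$ must be verified.

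The repair is precisely what the paper does: index the construction by the directed set $\mathcal F=\{(\varepsilon,F):\varepsilon>0,\ F\subseteq X^* \text{ finite-dimensional}\}$, ordered by $(\varepsilon,F)\leq(\delta,E)$ iff $\delta\leq\varepsilon$ and $F\subseteq E$. The Principle of Local Reflexivity then yields $T_{(\varepsilon,F)}$ on $\spann f(N)$ with $\Vert T_{(\varepsilon,F)}\Vert\leq 1+\varepsilon$ and $x^*(T_{(\varepsilon,F)}v)=v(x^*)$ \emph{exactly} for all $x^*\in F$; for any fixed $x^*$ this identity holds for all sufficiently large indices of the net, so any cluster point $G$ of the resulting net of extensions satisfies $x^*(G(x))=x^*(f(x))$ for every $x^*\in X^*$ and every $x\in N$. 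The remainder of your argument --- uniform boundedness of the extensions, weak$^*$ compactness of the finite product of balls, lower weak$^*$ semicontinuity of the norm giving $\Vert G\Vert\leq\Vert f\Vert$, and the reading of the hypothesis as the $\varepsilon$-finite extension property for every $\varepsilon>0$ so that the limiting bound is $\Vert f\Vert$ rather than $(1+\varepsilon)\Vert f\Vert$ --- coincides with the paper's and is correct.
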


\begin{proof} In the proof we inspire in a Lindenstrauss compactness argument (see e.g. \cite[Lemma III.4.3]{hww}). Let $M$ be a finite metric space, $N\subseteq M$ and $f:N\longrightarrow X^{**}$ be a Lipschitz function. Let us find a norm-preserving extension. To this end, define
$$\mathcal F:=\{(\varepsilon,F): \varepsilon>0, F\subseteq X^*\mbox{ finite-dimensional subspace}\}.$$
$\mathcal F$ is a directed set with the order $(\varepsilon,F)\leq (\delta, E)$ if $\delta\leq \varepsilon$ and $F\subseteq E$.

Pick $V:=span\{f(N)\}$. For every $(\varepsilon, F)\in \mathcal F$ we find, by the Principle of Local Reflexivity \cite[Lemma 9.15]{fab}, a bounded operator $T_{(\varepsilon,F)}\longrightarrow X$ so that
\begin{enumerate}
\item $\Vert T_{(\varepsilon,F)}\Vert\leq 1+\varepsilon$ and,
\item $x^*(T(v))=v(x^*)$ for every $x^*\in F$ and every $v\in V$.
\end{enumerate}
Consider $f_{(\varepsilon,F)}=T_{(\varepsilon,F)}\circ f: N\longrightarrow X$. By the linearity of $T_{(\varepsilon,f)}$ it is inmediate that $\Vert f_{(\varepsilon,F)}\Vert\leq (1+\varepsilon)\Vert f\Vert$. Since $X$ has the $\varepsilon$-finite extension property there exists a Lipschitz mapping $G_{(\varepsilon,F)}: M\longrightarrow X$ so that $G_{(\varepsilon,F)|N}=f$ and $\Vert G_{(\varepsilon, F)}\Vert\leq (1+\varepsilon)^2\Vert f\Vert$. Note that, by the construction, there exists $K$ large enough so that $G_{(\varepsilon,F)}(M)\subseteq K B_{X^{**}}$ and, consequently, 
$$(G_{(\varepsilon, F)})_{(\varepsilon, F)\in \mathcal F}\subseteq (K B_{X^{**}},w^*)^M,$$
which is a compact space under the product topology. Consequently, pick a cluster point $G$ of the net $G_{(\varepsilon, F)}$. Let us prove that $G$ satisfies our purposes. First of all, $\Vert G\Vert\leq \Vert f\Vert$. Suppose, on the contrary, that $\Vert G\Vert>\Vert f\Vert$. This means that there are $x\neq y\in M$ and $\delta>0$ so that
$$\frac{\Vert G(x)-G(y)\Vert}{d(x,y)}>(1+\delta)\Vert f\Vert,$$
and so find $x^*\in B_{X^*}$ so that $\frac{x^*(G(x))-x^*(G(y))}{d(x,y)}>(1+\delta)\Vert f\Vert$. Since $G$ is a cluster point of $G(\varepsilon,F)$ find $(\varepsilon,F)\in \mathcal F$ such that $(1+\varepsilon)^2<1+\delta$ and $x^*\in F$ so that $\frac{x^*(G_{(\varepsilon,F)}(x))-x^*(G_{(\varepsilon,F)}(y))}{d(x,y)}>(1+\delta)\Vert f\Vert$.  Since $x^*\in B_{X^*}$ we get that
$$(1+\delta)\Vert f\Vert\leq x^*\left(\frac{G_{(\varepsilon,F)}(x)-G_{(\varepsilon,F)}(y)}{d(x,y)}\right)\leq \Vert G_{(\varepsilon,F)}\Vert\leq (1+\varepsilon)^2\Vert f\Vert,$$
which entails a contradiction. Consequently, $\Vert G\Vert\leq \Vert f\Vert$.

We will derive actually the equality of norms when we have proved that $G$ coincides with $f$ on $N$. To this end we argue again by contradiction. Assume that there exists $x\in N$ so that $G(x)\neq f(x)$. Then we can find $\delta>0$ and $x^*\in B_{X^*}$ so that $x^*(G(x))-x^*(f(x))>\delta$. Since $G$ is a cluster point of $(G_{(\varepsilon,F)})_{(\varepsilon,F)\in\mathcal F}$ find $(\varepsilon,F)\in\mathcal F$ so that $x^*\in F$ and $x^*(G_{(\varepsilon,F)}(x))-x^*(f(x))>\delta$. Since $x\in N$ we get that $G_{(\varepsilon,F)}(x)=F_{(\varepsilon,F)}(x)=T_{(\varepsilon,F)}(f(x))$ and, since $x^*\in F$ and $f(x)\in V$, we derive that
$$x^*(G_{(\varepsilon,F)}(x))=x^*(T_{(\varepsilon,F)}(f(x)))=f(x)(x^*)=x^*(f(x)),$$
so $\delta<x^*(G_{(\varepsilon,F)}(x))-x^*(f(x))=x^*(f(x))-x^*(f(x))=0$, a contradiction. This contradiction proves that $G$ equals $f$ on $N$ and the proof is finished. \end{proof}

\begin{remark}\label{remark:linealbidual}
It is easy to prove from \cite[Theorem 6.1]{linds} that, given a Banach space $X$, then $X$ is an $L_1$-predual if, and only if, for every bounded operator $T:Y\longrightarrow X^{**}$ there exists, for every Banach space $Z$ containing $Y$, a norm-preserving extension $\hat T:Z\longrightarrow X^{**}$. 

Indeed, if $X$ is an $L_1$-predual so is $X^{**}$. If $T:Y\longrightarrow X^{**}$ and $Y$ is contained in $Z$, by \cite[Theorem 6.1 (5)]{linds} there is an extension $G:Z\longrightarrow X^{****}$ with $\Vert G\Vert=\Vert T\Vert$. Now, since there is a contractive projection $\pi:X^{****}\longrightarrow X^{**}$ by a classical result of Dixmier (see e.g. \cite[Exercise 5.7]{fab}) then $\hat T:=\pi\circ G:Z\longrightarrow X^{**}$ yields the desired extension.

The reverse implication is inmediate because the second statement implies that $X^{**}$ is an $L_1$-predual.
\end{remark}

Now we are ready to prove the second central result of this note.

\begin{theorem}\label{theo:condinece}
Let $X$ be a Banach space with the $\varepsilon$-finite extension property. Then $X$ is an $L_1$-predual.
\end{theorem}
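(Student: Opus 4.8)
The plan is to reduce the statement to a known ball-intersection characterisation of $L_1$-preduals. Recall (Lindenstrauss) that a Banach space $X$ is an $L_1$-predual if and only if it has the following property, often stated for the bidual: whenever $B(x_1,r_1),\dots,B(x_n,r_n)$ are closed balls in $X^{**}$ that pairwise intersect, i.e. $\|x_i-x_j\|\le r_i+r_j$ for all $i,j$, then they have a common point; equivalently, one uses the $3$-ball intersection property together with weak$^*$-compactness. By Proposition \ref{prop:bidual}, $X^{**}$ has the $0$-finite extension property, and by Lemma \ref{inheriL1predual} it suffices to prove that $X^{**}$ is an $L_1$-predual (since $X$ is an ideal in $X^{**}$ by local reflexivity). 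So from now on I work with a space $W$ (playing the role of $X^{**}$) having the $0$-finite extension property, and I must produce a common point of finitely many pairwise-intersecting balls.

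The key step is a clean metric encoding of the ball-intersection problem. Given points $x_1,\dots,x_n\in W$ and radii $r_1,\dots,r_n>0$ with $\|x_i-x_j\|\le r_i+r_j$, I build a finite metric space $M=N\cup\{p\}$ where $N=\{n_1,\dots,n_n\}$ carries the metric $d(n_i,n_j)=\|x_i-x_j\|$ and the new point $p$ is declared to be at distance $d(p,n_i)=r_i$ from each $n_i$. The triangle inequality for $M$ holds exactly because of the pairwise-intersection hypothesis (one checks $d(p,n_i)\le d(p,n_j)+d(n_j,n_i)$ is $r_i\le r_j+\|x_i-x_j\|$, which may fail — so one must first replace each $r_i$ by $\tilde r_i:=\max(r_i,\max_j(r_i\text{-type corrections}))$; more carefully, set $\tilde r_i$ to be the smallest numbers $\ge$ the original $r_i$ making $M$ a metric space, noting the balls $B(x_i,\tilde r_i)$ still pairwise intersect and a common point of the enlarged balls is what the $3$-ball property tolerates — this bookkeeping is the main technical obstacle). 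Define $f:N\to W$ by $f(n_i)=x_i$; then $\|f(n_i)-f(n_j)\|=d(n_i,n_j)$, so $\|f\|\le 1$. Apply the $0$-finite extension property to get $F:M\to W$ with $F|_N=f$ and $\|F\|\le 1$. Then $z:=F(p)$ satisfies $\|z-x_i\|=\|F(p)-F(n_i)\|\le d(p,n_i)=\tilde r_i$, so $z$ lies in every ball, giving the common point.

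The remaining work is to translate "$W$ has the weak$^*$-ball-intersection property" into "$W=X^{**}$ is an $L_1$-predual" via the cited characterisation \cite[Theorem 6.1]{linds}: there the equivalence is between being an $L_1$-predual and the $n$-ball intersection property (for all $n$, or equivalently $n=3$ with a compactness upgrade when passing to a dual space). Since $X^{**}$ is a dual space, the finite intersection property I have established upgrades to the full intersection property for weak$^*$-compact families of balls, which is the $L_1$-predual condition for $X^{**}$; then Lemma \ref{inheriL1predual} descends it to $X$. I expect the genuinely delicate point to be the metric-space construction: ensuring the auxiliary distances $\tilde r_i$ form a metric while not enlarging the balls beyond what the hypothesis controls, and handling the distinguished basepoint (one may simply adjoin $0$ to $M$ with $d(0,n_i):=\|x_i\|$, or translate so that $x_1=0$). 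Everything else is the routine linearisation/extension bookkeeping already used in the preceding results.
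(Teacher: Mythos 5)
Your overall strategy is the right one and matches the paper's: pass to $X^{**}$, which has the $0$-finite extension property by Proposition \ref{prop:bidual}, encode a prospective common point of pairwise intersecting balls as an extra point of a finite metric space, extend the map $\phi(x_i)\mapsto x_i$ (resp.\ $n_i\mapsto x_i$) without increasing the Lipschitz constant, and conclude via Lindenstrauss's ball-intersection characterisation, descending to $X$ by Lemma \ref{inheriL1predual}. However, the step you yourself flag as ``the main technical obstacle'' is a genuine gap, and the repair you sketch goes in the wrong direction. The triangle inequality for your space $M=N\cup\{p\}$ requires $|r_i-r_j|\le \Vert x_i-x_j\Vert$, which indeed can fail; but \emph{enlarging} the radii to $\tilde r_i\ge r_i$ destroys the conclusion: a point of $\bigcap_i B(x_i,\tilde r_i)$ need not lie in $\bigcap_i B(x_i,r_i)$. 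Concretely, take $x_1=x_2$, $r_1=1$, $r_2=10$: any metric-making enlargement forces $\tilde r_1=10$, and finding a point of $B(x_1,10)$ says nothing about $B(x_1,1)$. No ``$\varepsilon$-tolerance'' in the intersection characterisation absorbs an enlargement of fixed positive size. The correct bookkeeping is to \emph{shrink}: set $\tilde r_i:=\min_j\bigl(r_j+\Vert x_i-x_j\Vert\bigr)\le r_i$. One checks, using only the triangle inequality in $X^{**}$ and the pairwise-intersection hypothesis, that the shrunk balls still pairwise intersect, that $|\tilde r_i-\tilde r_j|\le\Vert x_i-x_j\Vert$ (so $M$ is a metric space), and that $B(x_i,\tilde r_i)\subseteq B(x_i,r_i)$, so the common point you produce is a common point of the original balls. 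With that substitution your argument closes.

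It is worth noting that the paper avoids this issue entirely by a cleaner device: it embeds $X^{**}$ isometrically into $\ell_\infty(\Gamma)$, where the four pairwise intersecting balls genuinely have a common point $z$ (since $\ell_\infty(\Gamma)$ is an $L_1$-predual), and then takes $M=\{\phi(x_1),\dots,\phi(x_4),z\}$ as an honest subset of $\ell_\infty(\Gamma)$, so the triangle inequality is automatic and $d(z,\phi(x_i))=\Vert z-\phi(x_i)\Vert\le r_i$ comes for free. Your abstract construction (once corrected as above) is a legitimate alternative that does not need the $\ell_\infty(\Gamma)$ embedding, at the price of the radius-normalisation lemma; also note that Lindenstrauss's criterion used in the paper is the \emph{four}-ball property (Theorem 6.1(12)), not a three-ball property, and the basepoint worry is vacuous since the $\varepsilon$-finite extension property as defined imposes no pointedness.
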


\begin{proof}
Let us prove that $X^{**}$ is an $L_1$-predual for which we will prove, thanks to \cite[Theorem 6.1 (12)]{linds}, that given four balls $\{B(x_i,r_i): 1\leq i\leq 4\}\subseteq X^{**}$ with the property that $B(x_i,r_i)\cap B(x_j,r_j)\neq \emptyset$ for every $i\neq j$ then $\bigcap\limits_{i=1}^4 B(x_i,r_i)\neq \emptyset$. To this end, take four such balls.  Now we can find a linear isometry $\phi: X^{**}\longrightarrow \ell_\infty(\Gamma)$ (see e.g. \cite[Proposition 5.11]{fab} and the paragraph below the proof). Since $\phi$ is an isometry then $B(\phi(x_i),r_i)\cap B(\phi(x_j),r_j)\neq \emptyset$ for every $i\neq j$. Since $\ell_\infty(\Gamma)$ is an $L_1$-predual, \cite[Theorem 6.1 (12)]{linds} implies that there exists $z\in \bigcap\limits_{i=1}^4 B(\phi(x_i),r_i)$. Define $f:\{\phi(x_i): 1\leq i\leq 4\}\longrightarrow X^{**}$ by the equation
$$f(\phi(x_i))=x_i.$$
Since $\phi$ is isometric we get that $\Vert f\Vert\leq 1$. By Proposition \ref{prop:bidual} we can extend $f$ to a $1$-Lipschitz map $F$ defined on $\{\phi(x_i):1\leq i\leq 4\}\cup\{z\}$. We claim that $F(z)\in \bigcap\limits_{i=1}^4 B(x_i,r_i)$. Indeed, given $i\in\{1,\ldots, 4\}$, since $z\in B(\phi(x_i),r_i)$ then $\Vert z-\phi(x_i)\Vert\leq r_i$. Hence
$$\Vert F(z)-x_i\Vert=\Vert F(z)-F(\phi(x_i))\Vert\leq \Vert F\Vert \Vert z-\phi(x_i)\Vert\leq r_i$$
which, in other words, means that $F(z)\in B(x_i,r_i)$. Since $i$ was arbitrary we get that $F(z)\in \bigcap\limits_{i=1}^4 B(x_i,r_i)$. 

By \cite[Theorem 6.1 (12)]{linds}, $X^{**}$ is an $L_1$-predual. Consequently, $X$ is an $L_1$-predual, as requested.
\end{proof}

As a consequence we get the following characterisation of $L_1$-predual spaces.

\begin{theorem}\label{theo:caraL1predual}
Let $X$ be a Banach space. The following assertions are equivalent:
\begin{itemize}
\item[(1)] $X$ is an $L_1$-predual.

\item[(2)] For every pair of metric spaces $N\subseteq M$ and every Lipschitz compact function $f:N\longrightarrow X$ there exists, for every $\varepsilon>0$, a Lipschitz compact function $F:M\longrightarrow X$ extending $f$ so that $\Vert F\Vert\leq (1+\varepsilon)\Vert f\Vert$.

\item[(2')] For every pair of metric spaces $N\subseteq M$ and every Lipschitz compact function $f:N\longrightarrow X$ there exists, for every $\varepsilon>0$, a Lipschitz function $F:M\longrightarrow X$ extending $f$ so that $\Vert F\Vert\leq (1+\varepsilon)\Vert f\Vert$.

\item[(3)] For every pair of metric spaces $N\subseteq M$ and every compact operator $T:\mathcal F(N)\longrightarrow X$ there exists, for every $\varepsilon>0$, a compact operator $\hat T:\mathcal F(M)\longrightarrow X$ extending $T$ and such that $\Vert \hat T\Vert\leq (1+\varepsilon)\Vert T\Vert$.

\item[(4)] For every pair of finite metric spaces $N\subseteq M$ and every Lipschitz function $f:N\longrightarrow X$ there exists, for every $\varepsilon>0$, a Lipschitz function $F:M\longrightarrow X$ extending $f$ so that $\Vert F\Vert\leq (1+\varepsilon)\Vert f\Vert$.

\item[(5)] For every pair of finite metric spaces $N\subseteq M$ and every Lipschitz function $f:N\longrightarrow X^{**}$ there exists a norm-preserving extension $F:M\longrightarrow X^{**}$.
\item[(6)] For every pair of finite metric spaces $N\subseteq M$, where $N$ has four points and $M$ has five points, and every Lipschitz function $f:N\longrightarrow X^{**}$ there exists a norm-preserving extension $F:M\longrightarrow X^{**}$.
\end{itemize}
\end{theorem}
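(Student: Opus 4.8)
The plan is to prove the cycle of implications $(1)\Rightarrow(2)\Rightarrow(2')\Rightarrow(4)\Rightarrow(1)$ together with the side equivalences $(2)\Leftrightarrow(3)$ and $(4)\Leftrightarrow(5)\Leftrightarrow(6)$, so that everything collapses into a single loop. Most of the heavy lifting is already done in the preceding results, so the proof of this theorem is essentially an exercise in bookkeeping.

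First I would observe that $(1)\Rightarrow(2)$ is exactly Theorem \ref{theo:condisufi}, and $(2)\Rightarrow(2')$ is trivial since a compact Lipschitz extension is in particular a Lipschitz extension. The implication $(2')\Rightarrow(4)$ is also immediate: when $N$ and $M$ are finite, every Lipschitz map $f:N\longrightarrow X$ is automatically compact (its difference quotient set is finite, hence relatively compact), so $(2')$ applied to such $f$ produces precisely the Lipschitz extension required in $(4)$ — that is, $(4)$ is nothing but the statement that $X$ has the $\varepsilon$-finite extension property for every $\varepsilon>0$. Then $(4)\Rightarrow(1)$ is exactly Theorem \ref{theo:condinece}. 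This already closes the main loop $(1)\Leftrightarrow(2)\Leftrightarrow(2')\Leftrightarrow(4)$.

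Next I would handle the equivalence $(2)\Leftrightarrow(3)$. This is a pure translation through the linearisation isometry: by the results recalled in Section \ref{section:preliminar}, a Lipschitz map $f:N\longrightarrow X$ with $f(0)=0$ corresponds to a bounded operator $T_f:\mathcal F(N)\longrightarrow X$ with $\Vert T_f\Vert=\Vert f\Vert$, $f$ is compact if and only if $T_f$ is compact (by \cite[Proposition 2.1]{jsv}), and an extension $F:M\longrightarrow X$ of $f$ corresponds to an extension $\hat T:\mathcal F(M)\longrightarrow X$ of $T_f$ (using that $\mathcal F(N)$ embeds isometrically into $\mathcal F(M)$ when $N\subseteq M$, after fixing a common distinguished point). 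One small wrinkle: in $(3)$ we are handed an arbitrary compact operator $T:\mathcal F(N)\longrightarrow X$, not necessarily one of the form $T_f$, but every bounded operator on $\mathcal F(N)$ is a linearisation $T_f$ for $f(m):=T(\delta_m)$, so this is not actually an obstruction. The case $f(0)\neq 0$ is reduced to the vanishing case by translating by $f(0)$, exactly as in the proof of Theorem \ref{theo:condisufi}.

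Finally, for $(4)\Leftrightarrow(5)\Leftrightarrow(6)$: the implication $(5)\Rightarrow(6)$ is trivial (restrict to the stated cardinalities), and $(4)\Rightarrow(5)$ follows from Proposition \ref{prop:bidual}, which says that the $\varepsilon$-finite extension property of $X$ (for all $\varepsilon>0$) forces the $0$-finite extension property of $X^{**}$. It remains to close the loop, and the natural way is $(6)\Rightarrow(1)$: I would re-run the argument of Theorem \ref{theo:condinece} but only invoking the four-ball characterisation \cite[Theorem 6.1 (12)]{linds}, where the finite metric space $N$ has the four centres $\phi(x_i)$ and $M=N\cup\{z\}$ has five points; then $(6)$ supplies exactly the norm-preserving extension $F$ needed to locate a point in $\bigcap_{i=1}^4 B(x_i,r_i)$, so $X^{**}$ and hence $X$ is an $L_1$-predual. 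The main (and only genuine) obstacle is the careful passage through the linearisation dictionary in $(2)\Leftrightarrow(3)$ — making sure that the isometric inclusion $\mathcal F(N)\hookrightarrow\mathcal F(M)$ is compatible with the choice of distinguished point and that compactness is preserved in both directions — but this is routine given \cite[Proposition 2.1]{jsv}; everything else is a direct citation of the earlier theorems.
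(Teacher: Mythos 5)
Your proposal is correct and follows essentially the same route as the paper: $(1)\Rightarrow(2)$ via Theorem \ref{theo:condisufi}, $(2)\Leftrightarrow(3)$ through the linearisation isometry, the trivial chain $(2)\Rightarrow(2')\Rightarrow(4)$, $(4)\Rightarrow(5)$ via Proposition \ref{prop:bidual}, $(5)\Rightarrow(6)$ by restriction, and $(6)\Rightarrow(1)$ by the four-ball argument of Theorem \ref{theo:condinece}. Your extra care with the base-point compatibility of the embedding $\mathcal F(N)\hookrightarrow\mathcal F(M)$ and with compactness in both directions of the linearisation dictionary is exactly the content the paper leaves implicit, so there is nothing to correct.
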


\begin{proof}
(1)$\Rightarrow$(2) is Theorem \ref{theo:condisufi}. (2)$\Leftrightarrow$(3) is inmediate and codified in the proof of Theorem \ref{theo:condisufi}. (2)$\Rightarrow$(2')$\Rightarrow$(4) is inmediate, (4)$\Rightarrow$(5) is Proposition \ref{prop:bidual}, and (5)$\Rightarrow$(6) is obvious and (6)$\Rightarrow$(1) is the proof of Theorem \ref{theo:condinece}.
\end{proof}

Some remarks are convenient.

\begin{remark}
\begin{enumerate}
\item In \cite[Theorem 6.1]{linds} it is proved that a Banach space $X$ is an $L_1$-predual if, and only if, for every compact operator $T:Y\longrightarrow X$ there exists, for every $\varepsilon>0$ and for every $Z\supset Y$, a compact extension $\hat T:Z\longrightarrow X$ so that $\Vert \hat T\Vert\leq (1+\varepsilon)\Vert T\Vert$. Theorem \ref{theo:caraL1predual} reveals that it is necessary and sufficient that the above mentioned extension result holds in  for $Y$ and $Z$ in the class of Lipschitz-free spaces.

\item Point (6) in Theorem \ref{theo:caraL1predual} reveals that, in order to get that a Banach space $X$ is an $L_1$-predual, it is enough to prove that every Lipschitz mapping $f:N\longrightarrow X^{**}$ with $N$ having for point has, for every $M$ containing $N$ and just one more point, a norm preserving extension $F:M\longrightarrow X^{**}$. This should be compared with \cite[Theorem 6.1]{linds} where it is proved that $X$ is an $L_1$-predual if, and only if, $X$ extends (for every $\varepsilon>0$) every bounded operator $T:Y\longrightarrow X$ with $dim(Y)\leq 3$ to a bounded operator $\hat T:Z\longrightarrow X$ with $dim(Z/Y)=1$ and $\Vert \hat T\Vert\leq (1+\varepsilon)\Vert T\Vert$.

\item Theorem \ref{theo:caraL1predual} gives a different proof of a fact which is in turn a manifestation of the four ball intersection property: the class of $L_1$-preduals is stable under (non necessarily linear) $(1+\varepsilon)$-isometries in the following sense.
\end{enumerate}
\end{remark}

\begin{corollary}
Let $X$ and $Y$ be two Banach spaces satisfying that, for every $\varepsilon>0$, there exists a bi-Lipschitz mapping $\phi_\varepsilon: X\longrightarrow Y$ so that $\Vert \phi_\varepsilon\Vert\Vert \phi_\varepsilon^{-1}\Vert\leq 1+\varepsilon$. If $X$ is an $L_1$-predual, then $Y$ is an $L_1$-predual.
\end{corollary}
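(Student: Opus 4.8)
The strategy is to transfer the finite extension property from $Y$ to $X$ via the bi-Lipschitz maps, and then invoke Theorem \ref{theo:caraL1predual}. More precisely, since $X$ is an $L_1$-predual, condition (2) of Theorem \ref{theo:caraL1predual} holds for $X$; in particular $X$ has the $\varepsilon$-finite extension property for every $\varepsilon>0$. We want to deduce the analogous property for $Y$, so that Theorem \ref{theo:condinece} (i.e.\ (4)$\Rightarrow$(1) in Theorem \ref{theo:caraL1predual}) applies.

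First I would fix $\varepsilon>0$, a pair of finite metric spaces $N\subseteq M$, and a Lipschitz function $g\colon N\longrightarrow Y$. Choose $\delta>0$ small, to be adjusted at the end, and pick the bi-Lipschitz map $\phi:=\phi_\delta\colon X\longrightarrow Y$ with $\norm{\phi}\,\norm{\phi^{-1}}\leq 1+\delta$; after rescaling we may assume $\norm{\phi^{-1}}\leq 1$ and $\norm{\phi}\leq 1+\delta$ (dividing $\phi$ by $\norm{\phi^{-1}}$ does not change the product of the norms). Now consider $f:=\phi^{-1}\circ g\colon N\longrightarrow X$. This is Lipschitz with $\norm{f}\leq\norm{\phi^{-1}}\,\norm{g}\leq\norm{g}$, and since $N$ is finite, $f$ is automatically compact. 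By condition (2) of Theorem \ref{theo:caraL1predual} applied to $X$ (with parameter $\delta$), there is a Lipschitz extension $\widetilde F\colon M\longrightarrow X$ of $f$ with $\norm{\widetilde F}\leq(1+\delta)\norm{f}\leq(1+\delta)\norm{g}$. Finally set $F:=\phi\circ\widetilde F\colon M\longrightarrow Y$. Then for $x\in N$ we have $F(x)=\phi(\widetilde F(x))=\phi(f(x))=\phi(\phi^{-1}(g(x)))=g(x)$, so $F$ extends $g$; and $\norm{F}\leq\norm{\phi}\,\norm{\widetilde F}\leq(1+\delta)^2\norm{g}$. Choosing $\delta$ with $(1+\delta)^2\leq 1+\varepsilon$ shows $Y$ has the $\varepsilon$-finite extension property. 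Since $\varepsilon>0$ was arbitrary, Theorem \ref{theo:condinece} gives that $Y$ is an $L_1$-predual.

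I expect the only delicate point to be the bookkeeping of Lipschitz constants: one must be careful that the composition of the (possibly non-linear) maps $\phi$, $\widetilde F$, $\phi^{-1}$ really does multiply Lipschitz constants, which holds because for any metric spaces and any Lipschitz $\psi,\chi$ one has $\norm{\psi\circ\chi}\leq\norm{\psi}\,\norm{\chi}$, and that the normalisation $\norm{\phi^{-1}}\leq 1$ can be arranged without loss. Everything else is a direct application of Theorem \ref{theo:caraL1predual}; note that we only need the \emph{finite} extension property, for which no compactness subtlety arises since all maps involved have finite domain. No new ideas beyond the main theorem are required, so the corollary is essentially immediate once the transfer of extension data through $\phi$ is set up correctly.
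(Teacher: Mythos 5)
Your proposal is correct and follows essentially the same route as the paper's proof: pull the map back to $X$ via $\phi_\varepsilon^{-1}$, extend there using the finite extension property of the $L_1$-predual $X$, and push forward via $\phi_\varepsilon$, obtaining the $(1+\varepsilon)^2$-type bound before invoking Theorem \ref{theo:caraL1predual}(4)$\Rightarrow$(1). The extra normalisation $\Vert\phi_\varepsilon^{-1}\Vert\leq 1$ is harmless but unnecessary, since the product $\Vert\phi_\varepsilon\Vert\Vert\phi_\varepsilon^{-1}\Vert\leq 1+\varepsilon$ already yields the required estimate directly.
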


The previous result implies, in particular, that the property of being an $L_1$-predual is preserved by Banach Mazur distance equal to $1$.

\begin{proof}
Let us prove, by Theorem \ref{theo:caraL1predual}, that given two finite metric spaces $N\subseteq M$ and given a Lipschitz mapping $f:N\longrightarrow Y$ there exists, for every $\varepsilon>0$, a Lipschitz map $F:M\longrightarrow Y$ which extends $f$ and so that $\Vert F\Vert\leq (1+\varepsilon)\Vert f\Vert$. To this end, given $\varepsilon>0$, consider a bi-Lipschitz mapping $\phi_\varepsilon:X\longrightarrow Y$ as in the statement. Now, we can apply Theorem \ref{theo:caraL1predual} to the Lipschitz mapping $\phi_\varepsilon^{-1}\circ f: N\longrightarrow X$ and we get, since $X$ is an $L_1$-predual, an extension $G:M\longrightarrow X$  with $\Vert G\Vert\leq (1+\varepsilon)\Vert \phi_\varepsilon^{-1}\circ f\Vert\leq (1+\varepsilon)\Vert \phi_\varepsilon^{-1}\Vert\Vert f\Vert$. Now consider $F:=\phi_\varepsilon\circ G: M\longrightarrow Y$, which is a Lipschitz map of norm $\Vert F\Vert\leq (1+\varepsilon)\Vert \phi_\varepsilon\Vert \Vert \phi_\varepsilon^{-1}\Vert\Vert f\Vert\leq (1+\varepsilon)^2 \Vert f\Vert$. Moreover, it is inmediate to prove that $F(x)=f(x)$ holds for every $x\in N$, as desired.
\end{proof}

\textbf{Acknowledgements:} The author is grateful to Luis Garc\'ia-Lirola, Gilles Godefroy, Gin\'es L\'opez-P\'erez and Miguel Mart\'in for answering his inquires and for valuable suggestions and comments. The research of this paper was suggested by Gilles Godefroy during the PhD defense of Rafael Chiclana. I would take the opportunity to congratulate Rafael Chiclana for his nice thesis and denfese.

\end{document}